\newcommand{\iso}{\ensuremath{\cong}}
\newcommand{\Z}[1][]{\ensuremath{\mathbb{Z}_{#1}}}
\newcommand{\Q}{\ensuremath{\mathbb{Q}}}
\newcommand{\N}{\ensuremath{\mathbb{N}}}
\newcommand{\proP}[2][p]{\ensuremath{\widehat{#2}_{(#1)}}}
\newcommand{\nsgp}[1][]{\ensuremath{\triangleleft_{\rm #1}}}
\newtheorem{theorem}{Theorem}
\newtheorem*{thmquote}{Theorem}
\newtheorem{prop}[theorem]{Proposition}
\newtheorem{clly}[theorem]{Corollary}
\theoremstyle{definition}
\newtheorem{defn}[theorem]{Definition}
\theoremstyle{remark}
\newtheorem*{rmk}{Remark}
\title{Profinite properties of RAAGs and special groups}
\author{Robert Kropholler and Gareth Wilkes}
\begin{document}
\maketitle
\begin{abstract}
In this paper we prove that RAAGs are distinguished from each other by their pro-$p$ completions for any choice of prime $p$, and that RACGs are distinguished from each other by their pro-2 completions. We also give a new proof that hyperbolic virtually special groups are good in the sense of Serre. Furthermore we give an example of a property of the underlying graph of a RAAG that translates to a property of the profinite completion.
\end{abstract}
Right-angled Artin groups (RAAGs) have been the subject of much recent interest, especially because of their rich subgroup structure; in particular every special group embeds in a RAAG \cite{haglund_special_2008}. Furthermore RAAGs are linear and have excellent residual properties. 
Here we will show that RAAGs, and the closely related right-angled Coxeter groups (RACGs), are in fact completely determined by their finite quotient groups. The proofs will rely principally on the cohomological rigidity result of Koberda \cite{koberda12}.

First let us recall some definitions.
\begin{defn}
Given a (finite simplicial) graph $\Gamma$, the {\em right-angled Artin group} $A(\Gamma)$ is the group with generating set $V(\Gamma)$ with the relation that vertices $v,w$ commute imposed whenever $v,w$ span an edge of $\Gamma$. The {\em right-angled Coxeter group} $C(\Gamma)$ is the quotient of $A(\Gamma)$ with the additional constraint that each generator has order 2.
\end{defn}
\begin{defn}
Given a discrete group $G$, the \emph{profinite completion} of $G$ is the inverse limit of the system of groups 
\[\hat G = \varprojlim_{N\nsgp[f] G} G/N\]
where $N$ ranges over the finite index normal subgroups of $G$. This is a compact Hausdorff topological group. Similarly one may define the pro-$p$ completion $\hat G_{(p)}$ as the inverse limit of all finite quotients of $G$ which are $p$-groups, for $p$ a prime.

The isomorphism type of a right-angled Artin group $A(\Gamma)$ uniquely determines the graph $\Gamma$ up to isomorphism; this fact was first established by Droms \cite{droms87}. We use a stronger cohomological criterion proved by Koberda \cite{koberda12}.

\begin{thmquote}[Koberda \cite{koberda12}]
Let $\Gamma,\Gamma'$ be finite graphs. Then $\Gamma\iso\Gamma'$ if and only if there is an isomorphism of cohomology groups
\[ H^\ast(A(\Gamma);\Q)\iso H^\ast(A(\Gamma');\Q)\]
in dimensions one and two, which respects the cup product.
\end{thmquote}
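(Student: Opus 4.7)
The forward implication is immediate: a graph isomorphism $\Gamma\to\Gamma'$ induces an isomorphism $A(\Gamma)\iso A(\Gamma')$ and hence of rational cohomology rings in all degrees, respecting cup product. For the converse, the plan is to reconstruct $\Gamma$ from the cup product structure in degrees $1$ and $2$; this amounts to a cohomological strengthening of Droms' theorem that $A(\Gamma)$ determines $\Gamma$.

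The first step is to compute $H^*(A(\Gamma);\Q)$ explicitly via the Salvetti complex, a classifying space for $A(\Gamma)$ built as a union of tori indexed by the cliques of $\Gamma$. One obtains the exterior face ring
\[
H^*(A(\Gamma);\Q) \iso \bigwedge\nolimits^* \Q^{V(\Gamma)}\big/\bigl(v^*\wedge w^* \ :\ \{v,w\}\notin E(\Gamma)\bigr),
\]
with $H^1$ having basis $\{v^* : v\in V(\Gamma)\}$ and $H^2$ having basis $\{v^*\cup w^* : \{v,w\}\in E(\Gamma)\}$. This immediately recovers $\dim H^1 = |V(\Gamma)|$ and $\dim H^2 = |E(\Gamma)|$ as numerical invariants, so $\Gamma$ and $\Gamma'$ have the same order and size.

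The heart of the argument is reconstructing the adjacency relation of $\Gamma$ from the cup product $c\colon H^1\wedge H^1\to H^2$. The essential difficulty is that the vertex basis is not singled out among bases of $H^1$: if one naively declared two elements of a basis adjacent whenever their cup product is nonzero, different bases would yield non-isomorphic graphs. For example, in the path on vertices $v_1,v_2,v_3$, the alternative basis $\{v_1^*+v_2^*,\ v_2^*+v_3^*,\ v_1^*+v_3^*\}$ produces three nonzero pairwise cup products and hence a triangle rather than a path. My plan is therefore to characterise the finite subset $\Sigma_\Gamma := \{[v^*] : v\in V(\Gamma)\} \subset \mathbb{P}(H^1)$ intrinsically in terms of the cup product alone --- for instance, by singling out the projective classes $[\alpha]$ for which the commuting subspace $C(\alpha):=\{\beta\in H^1 : \alpha\cup\beta=0\}$ is itself spanned by classes satisfying the same property, an inductive extremality condition.

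The main obstacle will be verifying that such an extremality condition picks out exactly $\Sigma_\Gamma$ and no spurious classes (like $v_1^*+v_3^*$ in the example above, which shares the commuting subspace $\langle v_1^*,v_3^*\rangle$ with the genuine vertex classes). Once the intrinsic characterisation of $\Sigma_\Gamma$ is in place, any isomorphism $\phi_1\colon H^1(A(\Gamma);\Q)\to H^1(A(\Gamma');\Q)$ compatible with a given $\phi_2$ must carry $\Sigma_\Gamma$ bijectively onto $\Sigma_{\Gamma'}$, and the preserved adjacency relation $v^*\cup w^*\neq 0$ then yields the desired isomorphism $\Gamma\iso\Gamma'$.
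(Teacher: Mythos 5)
Your forward direction, the computation of $H^\ast(A(\Gamma);\Q)$ as the exterior face ring of $\Gamma$ via the Salvetti complex, and your diagnosis of where the difficulty lies are all correct; the $P_3$ example showing that an arbitrary basis of $H^1$ can produce the wrong graph is exactly the right cautionary example. But the proposal stops at the step that constitutes the entire content of the theorem: you propose to characterise $\Sigma_\Gamma=\{[v^\ast]:v\in V(\Gamma)\}$ intrinsically in terms of the cup product, observe yourself that your candidate condition fails to exclude $v_1^\ast+v_3^\ast$, and leave this as an ``obstacle to be verified''. This is not a gap that can be patched within your framework, because $\Sigma_\Gamma$ is genuinely \emph{not} an invariant of the algebra. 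Whenever $\mathrm{lk}(v)\subseteq\mathrm{st}(w)$ the transvection $v\mapsto vw$ is an automorphism of $A(\Gamma)$, and it induces an automorphism of the cohomology ring sending $w^\ast$ to $w^\ast+v^\ast$; in your path example, taking $v=v_3$ and $w=v_1$ produces precisely the algebra automorphism carrying $v_1^\ast$ to $v_1^\ast+v_3^\ast$, and for a complete or edgeless graph \emph{every} basis of $H^1$ is related to the vertex basis by an algebra automorphism. Hence no condition phrased purely in terms of the cup product can single out $\Sigma_\Gamma$ pointwise; what must be proved instead is the weaker (but sufficient) statement that any cohomology isomorphism forces the two non-vanishing-cup-product graphs to be \emph{isomorphic}, even though it need not carry vertex classes to vertex classes.

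For comparison, the paper does not reprove Koberda's theorem at all: it only records the facts it needs from \cite{koberda12} --- that each vertex-dual class $f_v$ satisfies $\mathrm{rank}(f_v\smallsmile\bullet)=\deg(v)$ and that $w$ is adjacent to $v$ exactly when $f_v\smallsmile f_w\neq 0$ --- in order to argue that these data survive the change of coefficients from $\Q$ to $\Z/p$. The actual reconstruction argument, i.e.\ extracting from an abstract isomorphism of the degree one and two cohomology a graph isomorphism, is the substance of Koberda's proof and is absent from your write-up. Until that step is supplied by some mechanism other than pinning down $\Sigma_\Gamma$ inside $\mathbb{P}(H^1)$, the converse direction remains unproved.
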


The proof relied solely on the following fact: each vertex $v\in\Gamma$ is dual to a cohomology class $f_v\in H^1(A(\Gamma);\Q)$ for which the map
\[ f_v\smallsmile\bullet: H^1(A(\Gamma);\Q)\to H^2(A(\Gamma);\Q)\]
has rank precisely the degree of the vertex $v$; moreover a vertex $w\in\Gamma$ is adjacent to $v$ precisely if $f_v\smallsmile f_w$ is non-zero. 

Now the class $f_v\smallsmile f_w$ is dual to an embedded 2-torus in the Salvetti complex of $A(\Gamma)$, hence gives a primitive element of $H^2(A(\Gamma);\Z)$. It follows that changing the coefficient field \Q{} to a finite field $\Z/p$ (for $p$ a prime) changes neither the rank of the above map, nor the adjacency condition following it. Hence Koberda's cohomological rigidity result also holds with coefficient field $\Z/p$. 

It remains to show that the pro-$p$ completion of our right-angled Artin group detects the cohomology in dimensions one and two, and the cup product. As we will discuss later, it is frequently the case for groups arising in low-dimensional topology that the cohomology of a group is determined by its profinite completion. In low dimensions however we always have substantial control. See \cite{serre13} for definitions and basic properties of profinite cohomology; the definitions largely parallel those for discrete groups. In particular there is a natural notion of cup product and the natural map from $G$ to its profinite completion induces a map on cohomology respecting the cup product.

\begin{prop}\label{lowdimgoodness}
Let $G$ be a discrete group and $p$ a prime.  
\begin{itemize}
\item $H^1(\proP{G};\Z/p)\to H^1(G;\Z/p)$ is an isomorphism;
\item $H^2(\proP{G};\Z/p)\to H^2(G;\Z/p)$ is injective; and
\item if $H^{1+1}$ denotes that part of second cohomology generated by cup products of elements of $H^1$, then $H^{1+1}(\proP{G};\Z/p)\to H^{1+1}(G;\Z/p)$ is an isomorphism;
\end{itemize}
where all the maps are the natural ones induced by $G\to \proP{G}$.
\end{prop}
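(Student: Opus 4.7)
The plan rests on two standard facts: continuous cohomology of a profinite group with trivial $\Z/p$ coefficients is the direct limit over finite quotients of ordinary group cohomology; and any homomorphism from $G$ to a finite $p$-group factors uniquely through $\proP{G}$.

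The first bullet is immediate. Both $H^1(G;\Z/p)$ and $H^1(\proP{G};\Z/p)$ identify with $\mathrm{Hom}(G,\Z/p)$ and $\mathrm{Hom}_{\mathrm{cts}}(\proP{G},\Z/p)$ respectively, and an arbitrary homomorphism $G\to\Z/p$ factors through its (cyclic $p$-group) image, hence uniquely through $\proP{G}$. So the inflation map is a bijection.

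The core of the argument is the second bullet. Given $\alpha\in H^2(\proP{G};\Z/p)$ that vanishes over $G$, I would represent $\alpha$ as the inflation of a class $\bar\alpha\in H^2(Q;\Z/p)$ from some finite $p$-group quotient $\proP{G}\twoheadrightarrow Q$, and classify $\bar\alpha$ by a central extension $1\to\Z/p\to E\to Q\to 1$ of finite $p$-groups. The vanishing over $G$ supplies a set-theoretic splitting which can be promoted to a homomorphism $s\colon G\to E$ of the pulled-back extension. Since $E$ is a finite $p$-group, $s$ extends to a continuous $\hat s\colon \proP{G}\to E$ by the universal property. On the dense subgroup $G$, composing $\hat s$ with $E\to Q$ recovers the projection to $Q$, so the same identity holds on all of $\proP{G}$; thus $\hat s$ splits the extension classified by $\alpha$, and $\alpha=0$.

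The third bullet follows formally. For surjectivity, any generator $a\smile b$ of $H^{1+1}(G;\Z/p)$ lifts via the first bullet to $\tilde a,\tilde b\in H^1(\proP{G};\Z/p)$, and $\tilde a\smile \tilde b$ is a preimage in $H^{1+1}(\proP{G};\Z/p)$ since the natural map respects cup products. Injectivity is automatic, as $H^{1+1}(\proP{G};\Z/p)\subseteq H^2(\proP{G};\Z/p)$ and we have already shown the latter injects. The main obstacle is the extension-theoretic step for $H^2$: it requires knowing that every class in $H^2(\proP{G};\Z/p)$ is inflated from a finite $p$-group quotient, which is the standard identification of continuous pro-$p$ cohomology with a direct limit. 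Once that is in hand, the universal property of pro-$p$ completion does all the remaining work.
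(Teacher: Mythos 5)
Your proof is correct and follows essentially the same strategy as the paper's: pull the central extension back to $G$, use the vanishing of $\iota^\ast\alpha$ to obtain a homomorphic section over $G$, promote that section to $\proP{G}$ via the universal property of pro-$p$ completion, and verify the section identity by density and continuity. The only real difference is technical scaffolding: you first inflate the class from a finite $p$-group quotient $Q$ (invoking the standard colimit description of continuous cohomology of profinite groups) so that the universal property can be applied with the finite target $E$, whereas the paper works with the profinite extension $H$ directly and instead uses that the projection $P\to H$ induces a map $\proP{P}\to H$ because $H$ is pro-$p$ --- two routes to the same point.
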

\begin{proof}
The first point is a trivial consequence of the fact that $H^1(-;\Z/p)$ is naturally isomorphic to ${\rm Hom}(-;\Z/p)$ in either the category of discrete groups or the category of pro-$p$ groups. The third point follows from the first two and naturality of the cup product. The second is a special case of Exercise 2.6.1 of \cite{serre13}; we give here an explicit proof in dimension two in terms of extensions.

Recall that $H^2(-;\Z/p)$ classifies central extensions of $G$ by $\Z/p$ both for discrete and pro-$p$ groups (see Section 6.8 of \cite{RZ00} for the profinite theory). Take a central extension $H$ of \proP{G} by $\Z/p$ representing $\xi\in H^2(\proP{G};\Z/p)$. Then the pull-back 
\[P= \{(g,h)\in G\times H\text{ such that } \pi(h)=\iota(g)\}\]
(where $\pi:H\to \proP{G}$ and $\iota:G\to\proP{G}$ are the obvious maps) gives a central extension of $G$ by $\Z/p$ representing $\iota^\ast(\xi)$. If $\iota^\ast(\xi)=0$ in $H^2$ then the extension splits; so there is a group-theoretic section $s:G\to P$. Now $s$ induces a map $\hat{s}:\proP{G}\to \proP{P}$. Furthermore $H$ is a pro-$p$ group so that the projection ${\rm pr_2}:P\to H$ induces a map $\hat{\rm pr_2}:\proP{P}\to H$; then $\hat{\rm pr_2}\hat{s}$ is a section of $H\to \proP{G}$ and so $\xi$ was a trivial extension also.
\[\begin{tikzcd}
\Z/p\ar[hook]{rr}\ar{rd}\ar{dd} && P\ar[two heads]{rr}\ar{dd}[anchor=south, xshift=-8pt, yshift=10pt]{\rm pr_2}\ar{rd} && G\ar[bend left, dotted]{ll}{s} \ar{dd}[anchor=south, xshift=-2pt, yshift=10pt]{\iota}\ar{rd} & \\
& \Z/p\ar[crossing over]{rr}\ar{ld} && \proP{P}\ar[crossing over, two heads]{rr}\ar{ld}{\hat{\rm pr_2}} && \proP{G}\ar[bend left, dotted, crossing over]{ll}[anchor=east]{\hat s} \ar{ld}{\iso}\\
 \Z/p\ar[hook]{rr} && H \ar[two heads]{rr}{\pi} && \proP{G} &
\end{tikzcd}\]
\end{proof}
\begin{rmk}
Note that a diagram chase applied to the lower parallelogram in the above diagram shows that in fact $H\iso\proP{P}$. Thus the above analysis also illustrates why the map on $H^2$ may fail to be surjective; for any central extension of \proP{G} yielding a given extension $P$ of $G$ must be \proP{P}; however there is no {\it a priori} reason that the map from $\Z/p$ to \proP{P} need be injective. See the example at the end of \cite{lorensen10} for an example where the map on $H^2$ fails to be surjective.
\end{rmk}
Recall that for a RAAG, the dimension two cohomology is in fact generated by cup products; thus in dimensions one and two, the algebra $H^\ast(A(\Gamma);\Z/p)$ is determined by the pro-$p$ completion $\widehat{A(\Gamma)}_{(p)}$; hence we have proved
\begin{theorem}\label{RAAGsrigid}
Let $\Gamma,\Gamma'$ be finite graphs and $p$ a prime. Then $\widehat{A(\Gamma)}_{(p)}\iso\widehat{A(\Gamma')}_{(p)}$ if and only if $\Gamma\iso\Gamma'$.
\end{theorem}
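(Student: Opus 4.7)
One direction is trivial: any isomorphism $A(\Gamma)\iso A(\Gamma')$ induces an isomorphism on pro-$p$ completions. For the converse, the plan is to show that a pro-$p$ isomorphism forces the low-dimensional $\Z/p$-cohomology algebras to coincide, and then invoke Koberda's criterion (with $\Z/p$ coefficients, as justified in the paragraph following the quoted theorem) to conclude $\Gamma\iso\Gamma'$.

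Assume $\widehat{A(\Gamma)}_{(p)}\iso\widehat{A(\Gamma')}_{(p)}$. Any such isomorphism induces an isomorphism of graded cup-product algebras on $H^\ast(-;\Z/p)$ of the pro-$p$ completions in all degrees, since both sides are functors of the topological group. Applying Proposition~\ref{lowdimgoodness} to $G=A(\Gamma)$ and to $G=A(\Gamma')$, the natural maps induced by $G\to\proP{G}$ give isomorphisms on $H^1(-;\Z/p)$ and on the subalgebra $H^{1+1}(-;\Z/p)$ of $H^2(-;\Z/p)$ generated by cup products of degree one classes, and these maps respect cup products by naturality. Composing, we obtain an isomorphism of the $H^{1+1}$-subalgebras of $A(\Gamma)$ and $A(\Gamma')$ with $\Z/p$-coefficients.

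The last ingredient is the classical fact that for a RAAG, the entire cohomology ring $H^\ast(A(\Gamma);\Z/p)$ is generated as an algebra by $H^1$ — indeed it is the Stanley--Reisner-type exterior face algebra on the vertex set of $\Gamma$ modulo the ideal generated by products $f_v f_w$ for non-adjacent $v,w$. In particular $H^2(A(\Gamma);\Z/p)=H^{1+1}(A(\Gamma);\Z/p)$, and similarly for $\Gamma'$. Thus the isomorphism obtained above is in fact an isomorphism of the full cup-product algebras in degrees one and two. The $\Z/p$-coefficient version of Koberda's theorem, already observed to hold in the excerpt, then yields $\Gamma\iso\Gamma'$.

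There is no real obstacle here beyond assembling the pieces; the content of the argument sits entirely in Koberda's rigidity result and in the goodness-in-low-degree statement of Proposition~\ref{lowdimgoodness}. The one point worth double-checking is that the cup product is genuinely preserved when passing through the injections and isomorphisms of Proposition~\ref{lowdimgoodness}, but this is immediate from naturality of $\smallsmile$ under the group homomorphism $G\to\proP{G}$, so no additional work is required.
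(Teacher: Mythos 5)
Your proof is correct and follows essentially the same route as the paper: use Proposition~\ref{lowdimgoodness} to transfer $H^1$ and the cup-product part $H^{1+1}$ of $H^2$ from the pro-$p$ completion back to the discrete group, note that $H^2$ of a RAAG is generated by cup products of degree-one classes, and apply the $\Z/p$-coefficient version of Koberda's rigidity theorem. The paper's own argument is just a terser version of the same assembly of these ingredients.
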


In fact much more is true about the cohomology of the pro-$p$ completion of $A(\Gamma)$; in particular:
\begin{theorem}[Lorensen \cite{lorensen08}, \cite{lorensen10}]\label{goodRAAGs}
The map from a right-angled Artin group to its pro-$p$ completion (or profinite completion) induces an isomorphism of mod-$p$ cohomology for any prime $p$.
\end{theorem}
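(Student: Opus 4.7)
The plan is to induct on $|V(\Gamma)|$, using the two standard decompositions of a RAAG: as a direct product when $\Gamma$ is a join, and as an amalgamated free product along a parabolic subgroup otherwise. The base case $|V(\Gamma)|=1$ is $A(\Gamma)=\Z$ with pro-$p$ completion $\Z[p]$; the map on mod-$p$ cohomology is trivially an iso since both sides are $\Z/p$ in degrees $0,1$ and vanish higher.

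For the inductive step, first suppose $\Gamma=\Gamma_1\ast\Gamma_2$ is a non-trivial join, so that $A(\Gamma)=A(\Gamma_1)\times A(\Gamma_2)$. Since RAAGs are residually $p$, pro-$p$ completion commutes with this direct product, giving $\widehat{A(\Gamma)}_{(p)}\iso\widehat{A(\Gamma_1)}_{(p)}\times\widehat{A(\Gamma_2)}_{(p)}$. Both sides satisfy a K\"unneth isomorphism for $\Z/p$ coefficients (the pro-$p$ version being applicable because, by the inductive hypothesis, the mod-$p$ cohomology of each factor is finite-dimensional in each degree). Naturality of K\"unneth combined with the inductive hypothesis then yields the iso for $A(\Gamma)$.

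If $\Gamma$ is not a join, choose a vertex $v$ whose link $\text{lk}(v)$ is properly contained in $V(\Gamma)\setminus\{v\}$. This produces the amalgamated splitting
\[ A(\Gamma)=A(\Gamma\setminus\{v\})\ast_{A(\text{lk}(v))}A(\text{st}(v)), \]
where $A(\text{st}(v))=A(\text{lk}(v))\times\langle v\rangle$. The key observation is that both inclusions of the edge group $A(\text{lk}(v))$ admit retractions: the star inclusion is visibly a direct factor, while the inclusion into $A(\Gamma\setminus\{v\})$ is retracted by the homomorphism killing every vertex outside the link. This retraction property forces the splitting to be \emph{efficient} with respect to pro-$p$ completion, meaning $\widehat{A(\Gamma)}_{(p)}$ coincides with the pro-$p$ amalgamated free product of the pro-$p$ completions of the vertex groups over that of the edge group, and moreover the edge group completion embeds in each vertex group completion. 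One then obtains a Mayer--Vietoris long exact sequence in mod-$p$ cohomology for pro-$p$ groups that sits in a natural ladder with the discrete Mayer--Vietoris sequence for the splitting; the five lemma, together with the inductive hypotheses on the three smaller RAAGs $A(\Gamma\setminus\{v\})$, $A(\text{st}(v))$ and $A(\text{lk}(v))$, closes the induction.

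The main obstacle will be establishing efficiency of the amalgamation and validating the pro-$p$ Mayer--Vietoris sequence. The leverage comes from the retraction onto parabolic subgroups, a feature very specific to RAAGs: a retraction survives pro-$p$ completion, so the induced map on pro-$p$ completions of $A(\text{lk}(v))$ is injective into each vertex group's completion, and from this injectivity the efficient amalgam structure and the associated Mayer--Vietoris ladder can be assembled.
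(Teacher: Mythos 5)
The paper does not prove this statement at all --- it is imported verbatim from Lorensen \cite{lorensen08}, \cite{lorensen10} as a black box --- so there is no in-paper argument to compare against. Judged on its own terms, your outline is essentially the standard (and essentially Lorensen's) proof, and it is sound: induct on $|V(\Gamma)|$, use the K\"unneth isomorphism when $\Gamma$ is a join, and otherwise use the splitting $A(\Gamma)=A(\Gamma\setminus\{v\})\ast_{A(\mathrm{lk}(v))}A(\mathrm{st}(v))$ together with a Mayer--Vietoris ladder. It is also very much in the spirit of the machinery the paper does use elsewhere, namely efficiency of splittings and Theorem \ref{goodsplitting} from \cite{GJZZ08}, transplanted to the pro-$p$ setting. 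Two points deserve sharpening. First, ``injectivity of $\widehat{A(\mathrm{lk}(v))}_{(p)}$ into each vertex group's completion'' is necessary but not by itself sufficient for efficiency; the clean statement is that \emph{all three} subgroups $A(\Gamma\setminus\{v\})$, $A(\mathrm{st}(v))$, $A(\mathrm{lk}(v))$ are parabolic, hence retracts of $A(\Gamma)$ itself, and a retract of a residually-$p$ group is closed in the pro-$p$ topology and inherits its full pro-$p$ topology. That gives efficiency, whence the pro-$p$ completion is the \emph{proper} pro-$p$ amalgam of the completions. Second, the pro-$p$ Mayer--Vietoris sequence you invoke requires this properness: it comes from the exact sequence of permutation modules attached to the standard pro-$p$ tree of a proper amalgam (see \cite{RZ00}), followed by Shapiro's lemma; similarly the profinite K\"unneth isomorphism needs the finiteness of the cohomology in each degree, which your inductive hypothesis does supply. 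With those two points made precise, the induction closes and the argument is correct; the same scheme also handles the profinite (rather than pro-$p$) completion, since the retraction argument works equally well for the full profinite topology.
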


We can extend Theorem \ref{RAAGsrigid} to right-angled Coxeter groups by noting that there are natural isomorphisms
\[ H^1(C(\Gamma);\Z/2)\iso H^1(A(\Gamma);\Z/2) \]
and 
\[ H^2(C(\Gamma);\Z/2)\iso H^2(A(\Gamma);\Z/2) \oplus (\Z/2)^{|V(\Gamma)|}\]
where the second summand above derives from the relations $v^2=1$. The quotient map on $H^2$ which restricts to the isomorphism of the first summand with $H^2(A(\Gamma);\Z/2)$ is induced by the natural map $A(\Gamma)\to C(\Gamma)$. This quotient map is unique (i.e. does not depend on the presentation of $C(\Gamma)$ as a particular right-angled Coxeter group) in the following sense. Modulo 2, we have the relations $(a+b)^2 = a^2 + b^2$ so that the image of the squaring map $a\to a\smallsmile a$ is a subgroup $\Sigma$ of $H^2(C(\Gamma))$, the image of the diagonal subgroup of $(H^1(C(\Gamma)))^2$. The second summand $(\Z/2)^{|V(\Gamma)|}$ is precisely this subgroup $\Sigma$.

Thus the structure of the algebra $H^\ast(A(\Gamma))$ in dimensions one and two is determined by the behaviour of $H^\ast(C(\Gamma);\Z/2)$ in those dimensions, with the cup product map being given by the canonical map
\[(H^1(A(\Gamma)))^2\stackrel{\iso}{\to} (H^1(C(\Gamma)))^2 \stackrel{\smallsmile}{\to} H^2(C(\Gamma)) \to H^2(C(\Gamma))/\Sigma \iso H^2(A(\Gamma))\]
described above. Proposition \ref{lowdimgoodness} shows this algebra to be an invariant of the pro-2 completion, so that we have:
\begin{theorem}
Let $\Gamma,\Gamma'$ be finite graphs. Then $\widehat{C(\Gamma)}_{(2)}\iso\widehat{C(\Gamma')}_{(2)}$ if and only if $\Gamma\iso\Gamma'$.
\end{theorem}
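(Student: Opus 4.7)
The \emph{only if} direction is trivial, so the task is to show that an isomorphism $\widehat{C(\Gamma)}_{(2)}\iso\widehat{C(\Gamma')}_{(2)}$ forces $\Gamma\iso\Gamma'$. The plan is to mirror the proof of Theorem~\ref{RAAGsrigid}: use Proposition~\ref{lowdimgoodness} to recover the mod-$2$ cup-product algebra of $C(\Gamma)$ in dimensions one and two from the pro-$2$ completion, then extract from this the mod-$2$ cohomology algebra of the associated RAAG, and finally invoke Koberda's rigidity with $\Z/2$ coefficients to conclude.

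First I would verify that Proposition~\ref{lowdimgoodness} applies with full strength, that is, that $H^2(C(\Gamma);\Z/2)$ is spanned by cup products of $H^1$-classes. This is visible from the decomposition $H^2(C(\Gamma);\Z/2)\iso H^2(A(\Gamma);\Z/2)\oplus\Sigma$ already recorded above: the first summand is spanned by off-diagonal cup products $f_v\smallsmile f_w$ (as for RAAGs), while $\Sigma$ is by definition the image of the squaring map $a\mapsto a\smallsmile a$. Hence $H^{1+1}=H^2$ for $C(\Gamma)$ with $\Z/2$ coefficients, and Proposition~\ref{lowdimgoodness} delivers the mod-$2$ cup-product algebra of $C(\Gamma)$ in low dimensions as a pro-$2$ invariant.

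The key step, and the only place any subtlety enters, is to observe that $\Sigma$ is intrinsic to the algebra itself: it is the image of the algebraic squaring operation and is therefore transported by any isomorphism of cup-product algebras. Quotienting by $\Sigma$ and composing the cup product with this quotient produces the mod-$2$ cohomology algebra of $A(\Gamma)$ in dimensions one and two, so an isomorphism of pro-$2$ completions induces a cup-product-respecting isomorphism $H^\ast(A(\Gamma);\Z/2)\iso H^\ast(A(\Gamma');\Z/2)$ in those dimensions. Koberda's criterion with $\Z/2$ coefficients then yields $\Gamma\iso\Gamma'$. The canonicity of $\Sigma$ (that it does not depend on a choice of RACG presentation) has already been pinned down in the discussion preceding the theorem, so the remainder of the argument is formal assembly of the earlier results.
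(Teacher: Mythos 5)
Your proposal is correct and follows essentially the same route as the paper: recover the mod-$2$ cup-product algebra of $C(\Gamma)$ in dimensions one and two via Proposition~\ref{lowdimgoodness}, identify the intrinsically-defined subgroup $\Sigma$ as the image of the squaring map, pass to the quotient to obtain $H^\ast(A(\Gamma);\Z/2)$ with its cup product, and apply Koberda's rigidity with $\Z/2$ coefficients. The canonicity of $\Sigma$ is exactly the point the paper makes in the discussion preceding the theorem, so there is no gap.
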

\begin{rmk}
Proposition \ref{lowdimgoodness} was sufficient to prove the Theorem; in fact the map 
\[ H^2(\widehat{C(\Gamma)}_{(2)};\Z/2)\hookrightarrow H^2(C(\Gamma);\Z/2)\]
is surjective. It is in fact generated by cup products; the first summand of
\[ H^2(C(\Gamma);\Z/2)\iso H^2(A(\Gamma);\Z/2) \oplus (\Z/2)^{|V(\Gamma)|}\]
is generated by cup products of distinct elements on $H^1$, just as for $A(\Gamma)$. The second is generated by cup products $\alpha_w=w^\ast\smallsmile w^\ast$ for elements $w^\ast$ of $H^1$ dual to a generator $w$ of $C(\Gamma)$. Via the Hopf formula, these $\alpha_w$ correspond to the relations $w^2=1$. We may directly see that these $\alpha_w$ are in the image of 
\[ H^2(\widehat{C(\Gamma)}_{(2)};\Z/2)\hookrightarrow H^2(C(\Gamma);\Z/2)\]
as follows. Recall that the obstruction to surjectivity was that the pro-2 completion of certain short exact sequences may fail to be exact. Considering a classifying space for $C(\Gamma)$ one sees that $\alpha_w$ is the image of the non-zero element of $H^2(\Z/2;\Z/2)$ under the map to $H^2(C(\Gamma))$ induced by the map $w^\ast:C(\Gamma)\to \Z/2$. The extensions given by these cohomology classes are then pull-backs
\[\begin{tikzcd}
1\ar{r} & \Z/2 \ar{r} \ar{d} & P \ar{r}\ar{d} & C(\Gamma)\ar{r}\ar{d} & 1\\
1\ar{r} & \Z/2 \ar{r} & \Z/4 \ar{r} & \Z/2\ar{r}  &1
\end{tikzcd}\]
in which the map to $\Z/4$ witnesses the fact that $\Z/2\to\proP[2]{P}$ is an inclusion.

More explicitly, such an extension $P$ has presentation
\[\big< V(\Gamma)\,\big|\, [u,v]=1\text{ if }(u,v)\in E(\Gamma), v^2=1 \text{ for }v\neq w, w^4=1, w^2\in Z(P)\big>\]
where the map to $C(\Gamma)$ kills the central element $w^2$ and the map to $\Z/4$ kills all generators except $w$.
\end{rmk}
We made heavy use of the cohomology of the profinite completions of RAAGs and RACGs, so let us digress and study the following property. A group $G$ is {\em good} if the natural map on cohomology induced by $G \to \hat G$ is an isomorphism 
\[H^n(\hat G; M)\stackrel{\iso}{\to} H^n(G; M)\] 
for every finite $G$-module $M$ and every $n\geq 1$. 
\end{defn}
Note that this map $G\to \hat G$ for any group $G$ respects the cup product; for the cup product is defined for cohomology of profinite groups by precisely the same formulae as for abstract groups. Thus for a good group $G\to\hat G$ will not only induce an isomorphism of groups, but an isomorphism of graded algebras $H^\ast(\hat G; M)\iso H^\ast(G;M)$ under the cup product.

Goodness is preserved under taking extensions by good groups (see \cite{serre13}), and passing to finite index subgroups or overgroups. Recall:
\begin{thmquote}[Grunewald, Jaikin-Zapirain, Zalesskii \cite{GJZZ08}]
Finitely generated Fuchsian groups are good.
\end{thmquote}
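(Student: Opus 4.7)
The plan is to reduce via Selberg's lemma to a torsion-free finite-index subgroup, identify this subgroup as either a free group or a closed surface group, and then establish goodness in each case --- with the closed surface case being the real content.

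By Selberg's lemma, any finitely generated Fuchsian group $\Gamma$ contains a torsion-free subgroup $\Gamma_0$ of finite index, so by the preservation of goodness under finite-index overgroups recalled just before the theorem, it suffices to treat $\Gamma_0$. Since $\Gamma_0$ acts freely and properly discontinuously on $\mathbb{H}^2$, it is the fundamental group of a 2-manifold of finite type, hence is either a finitely generated free group or a closed surface group. A finitely generated free group $F$ is good almost by inspection: it has cohomological dimension $1$ both discretely and profinitely, the comparison map is an isomorphism on $H^1(-;M)$ (both sides compute derivations into $M$), and $H^n$ vanishes in either setting for $n \geq 2$.

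For the closed surface case, I would decompose $\pi_1(\Sigma)$ as an amalgamated free product $A *_C B$ along a separating simple closed curve (or as an HNN extension $A *_C$ along a non-separating one), where $A$ and $B$ are finite-rank free groups and $C \iso \Z$. The strategy is then to compare the Mayer--Vietoris long exact sequence in cohomology for the discrete decomposition of $\pi_1(\Sigma)$ with that arising from the decomposition $\widehat{\pi_1(\Sigma)} \iso \widehat A *_{\widehat C} \widehat B$ (or its HNN counterpart) in the category of profinite groups; a five-lemma argument, using the already-established goodness of the free and cyclic vertex and edge groups, then transfers goodness to $\pi_1(\Sigma)$. The whole argument must be done with arbitrary finite coefficient modules, but the procedure is the same in each case.

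The main obstacle is justifying that the profinite amalgamation makes sense and supplies an exact Mayer--Vietoris sequence. Specifically, one needs to know that (i) the natural maps $\widehat A, \widehat B, \widehat C \to \widehat{\pi_1(\Sigma)}$ are injective, (ii) the comparison map $\widehat A *_{\widehat C} \widehat B \to \widehat{\pi_1(\Sigma)}$ is an isomorphism of profinite groups, and (iii) the corresponding profinite Mayer--Vietoris sequence is exact. All of these hinge on the subgroup separability (LERF) of surface groups, due to Scott, together with the Bass--Serre theory of profinite groups acting on profinite trees developed by Ribes--Zalesskii. Setting up this compatibility between the discrete and profinite graph-of-groups decompositions is the technical heart of the Grunewald--Jaikin-Zapirain--Zalesskii argument.
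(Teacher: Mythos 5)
The paper only quotes this theorem from \cite{GJZZ08} and gives no proof of its own, so there is no in-paper argument to compare against; your outline is essentially the argument of that reference. Reducing via Selberg's lemma to a torsion-free finite-index subgroup, disposing of the free case by cohomological dimension one, and treating closed surface groups via an efficient splitting along a simple closed curve together with a five-lemma comparison of the discrete and profinite Mayer--Vietoris sequences --- which is exactly the content of the paper's Theorem \ref{goodsplitting}, with efficiency supplied by Scott's LERF theorem for surface groups --- is correct and complete in outline, and the points you flag (properness of the profinite amalgam and exactness of the profinite Mayer--Vietoris sequence) are indeed where the technical work lies.
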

Hence all virtually-fibred 3-manifold groups are good; due to work of Agol \cite{agol_virtual_2013} and Wise \cite{wise_structure_2011} this includes all hyperbolic 3-manifolds. 

Goodness is also preserved under taking amalgamated free products and HNN extensions, given suitable conditions. Recall that the {\em full profinite topology} on a group $H$ is the topology induced by the map $H\to\hat H$. An amalgamated free product $G=A\ast_C B$ is said to be {\em efficient} if $G$ is residually finite, if $A,B,C$ are closed in the profinite topology on $G$, and if $G$ induces the full profinite topology on $A,B$ and $C$. In particular if $G$ is LERF (and $A,B,C$ are finitely generated) this condition will be satisfied. Similarly one may define efficient HNN extensions. Then:
\begin{theorem}[Proposition 3.6 of \cite{GJZZ08}]\label{goodsplitting}
An efficient amalgamated free product or HNN extension of good groups is good.
\end{theorem}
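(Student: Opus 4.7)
The plan is to prove goodness by comparing Mayer--Vietoris long exact sequences in cohomology. For an efficient amalgamated free product $G = A\ast_C B$ with $A, B, C$ good, I will build two such sequences---one for $G$ in the discrete category, one for $\hat G$ in the profinite category---and compare them via the five-lemma.

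For any finite $G$-module $M$, the splitting $G = A\ast_C B$ yields the familiar Mayer--Vietoris long exact sequence arising from Bass--Serre theory,
\[ \cdots \to H^{n-1}(C; M) \to H^n(G; M) \to H^n(A; M) \oplus H^n(B; M) \to H^n(C; M) \to \cdots. \]
The key technical step is to produce the analogous sequence for the profinite completion. The efficiency hypothesis is tailor-made for this: the subgroups $A, B, C$ being closed in the full profinite topology of $G$ and $G$ inducing the full profinite topology on each of them together ensure that the natural maps $\hat A, \hat B, \hat C \to \hat G$ are injective and identify their images with the closures of $A, B, C$ in $\hat G$; moreover $\hat G$ is genuinely the profinite amalgamated free product $\hat A\amalg_{\hat C}\hat B$ in the sense of Ribes--Zalesskii. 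The profinite analogue of Bass--Serre theory then produces a Mayer--Vietoris sequence
\[ \cdots \to H^{n-1}(\hat C; M) \to H^n(\hat G; M) \to H^n(\hat A; M) \oplus H^n(\hat B; M) \to H^n(\hat C; M) \to \cdots, \]
in which $M$ is regarded as a $\hat G$-module through the factorisation of its $G$-action through a suitable finite quotient appearing in the inverse system for $\hat G$.

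The canonical maps from each discrete group to its profinite completion induce a commutative ladder between these two long exact sequences, compatible with the connecting homomorphisms. Since $A, B$ and $C$ are assumed good, the vertical maps on the cohomology of $\hat A, \hat B, \hat C$ are all isomorphisms, and the five-lemma then forces $H^n(\hat G; M) \to H^n(G; M)$ to be an isomorphism for every $n \geq 1$, which is exactly goodness of $G$. The HNN extension case is formally identical, substituting the Mayer--Vietoris sequence for an HNN extension for the one above.

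The principal obstacle is establishing the profinite Mayer--Vietoris sequence and verifying its compatibility with the discrete one. Efficiency is the hypothesis that makes this honest: without it, the profinite completion of a graph of groups can fail to be the profinite amalgamated product of the profinite completions of its vertex and edge groups---either because the maps from the vertex completions into $\hat G$ are not injective, or because the resulting object is not a pushout in profinite groups---and consequently the upper sequence need not even exist, let alone map nicely to the lower one.
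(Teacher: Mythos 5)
The paper does not prove this statement---it is quoted as Proposition 3.6 of \cite{GJZZ08}---and your argument is essentially the one given there: efficiency guarantees that $\hat G$ is the proper profinite amalgam (or HNN extension) of the completions of the vertex and edge groups, so that the profinite Bass--Serre tree yields a Mayer--Vietoris sequence mapping to the discrete one, and the five-lemma together with goodness of $A$, $B$, $C$ finishes the proof. Your proposal is correct and matches the standard approach.
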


We may now prove that all hyperbolic virtually special groups are good. This was first proved by Minasyan and Zalesskii \cite{MZ15} using virtual retraction properties; we give another proof using cube complex hierarchies.

\begin{defn}
Let $\mathcal{QVH}$ be the smallest class of hyperbolic groups closed under the following operations. 
\begin{enumerate}
\item The trivial group is in $\mathcal{QVH}$.
\item If $A, B, C\in\mathcal{QVH}$ and $C$ is quasiconvex in $G = A\ast_CB$, then $G\in\mathcal{QVH}$.
\item If $A, B\in\mathcal{QVH}$ and $B$ is quasiconvex in $G = A\ast_B$, then $G\in\mathcal{QVH}$.
\item If $H$ is commensurable to $G$ and $G\in\mathcal{QVH}$, then $H\in\mathcal{QVH}$.
\end{enumerate}
\end{defn}

We have the following useful characterisation of the groups in $\mathcal{QVH}$. 
\begin{thmquote}[Wise \cite{wise_structure_2011}]
A hyperbolic group is in $\mathcal{QVH}$ if and only if it is virtually special.
\end{thmquote}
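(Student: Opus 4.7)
The statement has two directions, which I would address separately. For the implication that every group in $\mathcal{QVH}$ is virtually special, I would argue by induction on the construction of $\mathcal{QVH}$. The trivial group acts on a point and is vacuously virtually special. The commensurability closure is immediate since virtual specialness is preserved under passage to finite-index subgroups and overgroups. The crux is the combination cases: given virtually special $A, B$ with $C$ quasiconvex in $G = A\ast_C B$, I would need a combination theorem producing a special cube complex structure on some finite-index subgroup of $G$. The natural approach is to glue cube complexes for $A$ and $B$ along one for $C$ using the Bass--Serre tree of the splitting; quasiconvexity of $C$ is what ensures the edge-space and vertex-space cubulations are compatible. Ensuring specialness of the glued complex (no hyperplane self-intersection, direct or indirect self-osculation, or one-sidedness) would require passing to a deeper finite-index subgroup, using the separability properties available in virtually special groups.

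For the reverse implication that every hyperbolic virtually special group lies in $\mathcal{QVH}$, the strategy is to inductively peel off quasiconvex splittings. After replacing $G$ by a finite-index subgroup acting geometrically on a special cube complex $X$, a carefully chosen hyperplane (or orbit family of hyperplanes) of $X$ provides a decomposition of $G$ as an amalgamated free product or HNN extension with edge group the stabiliser of the hyperplane, which is quasiconvex in $G$ and still hyperbolic and virtually special. One then wants to iterate, applying the construction to each vertex group; closing the class under commensurability should allow the recursion to terminate at the trivial group.

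The main obstacle is controlling this iteration in the second direction. Even with hyperplane combinatorics in hand, the edge groups produced at each stage may themselves be cubulated virtually special groups of comparable complexity to $G$, and there is no obvious numerical invariant forced to drop. Moreover one needs to handle cyclic edge stabilisers and other subtle cube-complex pathologies that do not admit a naive hyperplane splitting. Arranging a well-founded recursion is precisely the content of Wise's Quasiconvex Hierarchy Theorem, which rests on his Malnormal Special Quotient Theorem and the Haglund--Wise canonical completion/retraction machinery. I would not attempt to reprove these; in the present context the statement is invoked as a black-box input enabling an induction on the $\mathcal{QVH}$ hierarchy which, combined with Theorem \ref{goodsplitting} and the base case of the trivial group, will transfer goodness up to all hyperbolic virtually special groups.
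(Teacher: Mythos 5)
The paper offers no proof of this statement: it is quoted verbatim from Wise \cite{wise_structure_2011} and used purely as a black box, so your ultimate decision to defer to Wise's Quasiconvex Hierarchy Theorem and the Malnormal Special Quotient Theorem is exactly the paper's own treatment, and as a citation your proposal is fine.

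Two corrections to the sketch itself, however. First, you have located the main difficulty in the wrong direction. The implication ``virtually special $\Rightarrow \mathcal{QVH}$'' is the comparatively elementary one: after passing to a finite-index subgroup acting freely and cocompactly on a CAT(0) cube complex with special quotient, one cuts along a hyperplane; the hyperplane stabiliser acts on the hyperplane itself, a special cube complex of strictly smaller dimension, while the resulting vertex groups inherit strictly shorter hierarchies, so the lexicographically ordered pair (dimension, hierarchy length) does decrease. This is precisely the complexity $\mu(G)=(n(G),m(G))$ that the paper introduces in its goodness argument, so your worry that ``there is no obvious numerical invariant forced to drop'' is misplaced; quasiconvexity of hyperplane stabilisers comes from combinatorial convexity of hyperplanes, and separability of quasiconvex subgroups keeps the splittings under control. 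Second, the genuinely hard direction is ``$\mathcal{QVH}\Rightarrow$ virtually special'': the gluing-along-the-Bass--Serre-tree plus separability outline you give is essentially the Haglund--Wise combination theorem, which handles \emph{malnormal} quasiconvex edge groups; promoting this to arbitrary quasiconvex edge groups is exactly where the Malnormal Special Quotient Theorem and the cubical small-cancellation/Dehn-filling machinery enter, and no amount of passage to finite-index subgroups alone substitutes for them. You do acknowledge this at the end, which is the right call, but the acknowledgement belongs to the first direction rather than the second.
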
  
Furthermore recall
\begin{thmquote}[Haglund-Wise \cite{haglund_special_2008}]
Quasi-convex subgroups of hyperbolic virtually special groups are separable.
\end{thmquote}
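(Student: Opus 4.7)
The plan is to reduce to the case where $G$ is the fundamental group of a compact special cube complex, realise the quasiconvex subgroup geometrically as a locally convex subcomplex, and then embed this subcomplex into a finite cover via the canonical completion construction. Separability of $H$ in $G$ is equivalent to separability of $H\cap G_0$ in any finite-index $G_0\leq G$, and quasiconvexity passes to finite-index subgroups, so I may replace $G$ by a finite-index subgroup $G_0=\pi_1(X)$ with $X$ a compact nonpositively curved special cube complex, and $H$ by $H_0=H\cap G_0$.

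The first geometric step is to realise $H_0$ as the image of $\pi_1$ of a compact cube complex $Y$ admitting a local isometry to $X$. For this I would take the combinatorial convex hull $C\subseteq\widetilde{X}$ of an $H_0$-orbit in the CAT(0) cube complex $\widetilde{X}$. Quasiconvexity of $H_0$ together with hyperbolicity of $G_0$ ensures that $C$ lies within bounded Hausdorff distance of the orbit itself, so $C$ is $H_0$-cocompact, and the quotient $Y=C/H_0$ is a compact cube complex with a combinatorial local isometry $Y\to X$ inducing the inclusion $H_0\hookrightarrow\pi_1(X)$ on fundamental groups.

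The central step, and the main obstacle, is the \emph{canonical completion} construction of Haglund--Wise: for any local isometry $Y\to X$ with $Y$ compact and $X$ compact special, there exists a finite cover $\widehat{X}\to X$ into which $Y$ embeds as a locally convex subcomplex. The cover is built by gluing finitely many copies of cubes of $X$ around $Y$ according to a recipe driven by the hyperplane combinatorics of $X$ and $Y$. The four defining conditions of specialness --- hyperplanes are two-sided and do not self-intersect, self-osculate, or inter-osculate --- are exactly what make this gluing procedure canonical, consistent and finite. All the delicate combinatorial work of the theorem is concentrated here; without specialness the hyperplane pathologies destroy the construction.

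Granting the canonical completion, separability follows by a standard hyperplane argument. Given $g\in G_0\setminus H_0$, represent $g$ by an edge path $\gamma$ in $\widehat{X}$ based at a vertex of the embedded $Y$; since $g\notin\pi_1(Y)=H_0$, the path $\gamma$ exits $Y$ across some hyperplane $h$ of $\widehat{X}$ transverse to $Y$. Passing to a further finite cover of $\widehat{X}$ in which $h$ becomes two-sided and separating, with $Y$ on one side of its preimage and the endpoint of a lift of $\gamma$ on the other, yields a finite-index subgroup of $\pi_1(\widehat{X})$, hence of $G_0$, which contains $H_0$ but not $g$. This exhibits $H_0$ as separable in $G_0$ and therefore $H$ as separable in $G$.
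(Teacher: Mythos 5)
The paper does not prove this statement: it is quoted as a black box from Haglund--Wise and used only as an ingredient in the goodness argument, so there is no internal proof to compare against. Judged on its own terms, your sketch gets the standard architecture right for the first three steps --- reduction to a finite-index subgroup $G_0=\pi_1(X)$ with $X$ compact special, the cocompact combinatorial convex core $Y\to X$ coming from quasiconvexity plus hyperbolicity, and the canonical completion of a local isometry from a compact complex into a special complex. Those are exactly the ingredients of the Haglund--Wise proof.

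The genuine gap is in the last step. The whole point of the canonical completion $\widehat{X}$ is not merely that $Y$ embeds in it but that it comes equipped with a canonical \emph{retraction} $\widehat{X}\to Y$; this exhibits $H_0$ as a virtual retract of $G_0$, and virtual retracts of residually finite groups (special groups embed in RAAGs, hence are linear and residually finite) are separable by the elementary equalizer argument. You discard the retraction and substitute a ``hyperplane separation'' finish, and that argument does not go through as stated. First, a loop at a vertex of $Y$ representing $g\in\pi_1(\widehat{X})\setminus H_0$ need not ``exit $Y$ across a hyperplane transverse to $Y$'' in any useful sense: in the universal cover the translate $g\widetilde{Y}$ of the convex lift $\widetilde{Y}$ may intersect $\widetilde{Y}$, so there need be no hyperplane separating the endpoint of the lifted path from $\widetilde{Y}$ at all. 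Second, even when such a hyperplane $h$ exists, producing a further finite cover in which $h$ becomes embedded, two-sided and separating with $Y$ entirely on one side is itself a separability statement about the stabiliser of $h$ and its double cosets with $H_0$; asserting it here is essentially circular. You should replace the final paragraph by: the canonical retraction $\widehat{X}\to Y$ induces a retraction $\pi_1(\widehat{X})\to H_0$, so $H_0$ is a virtual retract of the residually finite group $G_0$ and hence separable, and $H$ is then a finite union of closed cosets of $H_0=H\cap G_0$ and so closed in $G$.
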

With this in mind we can now prove. 

\begin{theorem}
Hyperbolic virtually special groups are good.
\end{theorem}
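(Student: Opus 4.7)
The plan is to perform structural induction on the class $\mathcal{QVH}$, using Wise's characterisation that every hyperbolic virtually special group is built from the trivial group by the four defining operations. It therefore suffices to show that goodness is preserved under each of these operations. The trivial group is obviously good, and operation (4) is handled by the fact recalled above that goodness passes to finite-index subgroups and overgroups: if $H$ is commensurable with a good group $G$, then a common finite-index subgroup is good, and hence so is $H$.

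For operations (2) and (3), the plan is to invoke Theorem \ref{goodsplitting}, so that the task reduces to showing that the splitting $G = A \ast_C B$ (respectively, $G = A\ast_B$) is efficient. This requires residual finiteness of $G$, closedness of the factor and edge groups in the profinite topology on $G$, and that $G$ induce the full profinite topology on each of them. Residual finiteness is immediate from virtual specialness. For the remaining conditions the key input is the Haglund--Wise theorem on separability of quasi-convex subgroups of hyperbolic virtually special groups. In the amalgamated case, I would first note that $A$ and $B$ are themselves quasi-convex in $G$; this is a standard consequence of $G$ being hyperbolic together with quasi-convexity of the edge group $C$. Thus $A$, $B$, and $C$ are all separable in $G$ and in particular closed in the profinite topology.

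The main obstacle will be verifying the full profinite topology condition. Given a finite-index subgroup $K \sbgp[f] A$, one observes that $K$ is again quasi-convex in $G$ (being finite-index in a quasi-convex subgroup of a hyperbolic group), hence separable by Haglund--Wise. Choosing coset representatives $a_1, \dots, a_n$ for the nontrivial cosets of $K$ in $A$, for each $a_i \notin K$ separability provides a finite-index subgroup $L_i \sbgp[f] G$ with $K \leq L_i$ and $a_i \notin L_i$; the intersection $L = \bigcap_i L_i$ is then a finite-index subgroup of $G$ satisfying $L \cap A = K$, which is exactly the required condition. The same argument applies to $B$ and $C$ in the amalgamated case, and to the single edge group in the HNN case. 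This establishes efficiency of the splitting, and Theorem \ref{goodsplitting} then closes the induction.
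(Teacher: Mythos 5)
Your argument is correct and rests on the same key ingredients as the paper's proof: the Haglund--Wise separability theorem to verify that each splitting in the hierarchy is efficient, and Theorem \ref{goodsplitting} to propagate goodness up the hierarchy. The difference is in how the induction is organised. You induct directly on the inductive definition of $\mathcal{QVH}$, so that termination is built into the definition of the class, and Wise's theorem is used to identify $\mathcal{QVH}$ with the hyperbolic virtually special groups; the paper instead passes to a finite-index special subgroup and inducts on an explicit complexity $(n(G),m(G))$ --- the minimal dimension of a cube complex realising $G$ and the length of a shortest hierarchy --- extracting the splittings geometrically from hyperplane stabilisers and checking that the complexity strictly decreases. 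Your version is slightly leaner, since it dispenses with the complexity measure, at the cost of being less explicit about where the splittings come from. Two points you should make explicit to close the argument cleanly: first, in the inductive step you must carry along the fact that $G=A\ast_C B$ (respectively $A\ast_B$) is itself in $\mathcal{QVH}$, hence hyperbolic and virtually special, before invoking Haglund--Wise for the separability of $A$, $B$, $C$ and of their finite-index subgroups (this is exactly what the closure operations of $\mathcal{QVH}$ provide, so it is a matter of phrasing rather than a gap); second, your appeal to quasiconvexity of the vertex groups $A$ and $B$ given quasiconvexity of the edge group in a hyperbolic $G$ is a genuine standard fact but deserves a citation, since the paper sidesteps it by obtaining $A$ and $B$ directly as stabilisers of complementary components of a hyperplane in a CAT(0) cube complex.
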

\begin{proof}
As noted above, we are free to pass to an arbitrary finite index subgroup of $G$ and prove goodness there. We define a measure of complexity for a special group $H$. Set $n(H)$ to be the minimal dimension of a CAT(0) cube complex $X$ on which $H$ acts with special quotient. After subdividing, a hyperplane in this complex is an embedded 2-sided cubical subcomplex, and $H$ splits as an HNN extension or amalgamated free product over the stabiliser of this hyperplane. Iterating this process yields a rooted tree of groups in which each vertex has either two or three descendants (depending on whether the vertex splits as an HNN extension or amalgamated free product). Because $H$ is in $\cal QVH$, this tree is finite and each branch terminates in the trivial group. Set $m(H)$ to be the length of a longest branch over all such trees with minimal diameter; that is, the length of a shortest hierarchy for $H$. Now define the {\em quasiconvex hierarchy complexity} $\mu(G)$ of a special group $G$ to be the pair of integers $(n(G),m(G))$; order the pairs $(n,m)\in\N\times\N$ lexicographically.

Now assume that all hyperbolic special groups $H$ with $\mu(H)<\mu(G)$ are good. We have a splitting of $G$ either as $A\ast_C B$ or $A\ast_C$. Note that $A,B,C$ are hyperbolic. Now $C$ is the stabiliser of a hyperplane; this is a CAT(0) cube complex whose quotient by $C$ is special and for which the intersections with other hyperplanes of $X$ give a quasiconvex hierarchy; hence $n(C)<n(G)$, so $\mu(C)<\mu(G)$ and so $C$ is good. Furthermore $A$ and $B$ have shorter hierarchies than $G$, so whether or not $n(A)=n(G)$, the complexities $\mu(A)<\mu(G)$ and $\mu(B)<\mu(G)$ do strictly decrease. Thus $A,B,C$ are good. Furthermore quasiconvex subgroups of $G$ are separable. All finite index subgroups of $A,B,C$ are quasiconvex so the splitting is efficient and we may apply Theorem \ref{goodsplitting} to conclude that $G$ is good. Note that the base case for the induction is simply the trivial group. 
\end{proof}

Recalling that Haglund and Wise \cite{HW10} proved that all Coxeter groups are virtually special, we have:
\begin{clly}\label{goodRACGs}
Hyperbolic Coxeter groups are good. 
\end{clly}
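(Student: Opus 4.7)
The plan is a direct deduction from the theorem just proved. Let $W$ be a hyperbolic Coxeter group. By the cited result of Haglund and Wise, every Coxeter group is virtually special, so there is a finite-index subgroup $W_0 \leq W$ which is special. Since the inclusion of a finite-index subgroup is a quasi-isometry, $W_0$ inherits hyperbolicity from $W$, and is therefore a hyperbolic virtually special group (trivially so, as a special group). The preceding theorem applies to give that $W_0$ is good.

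Next I would invoke the closure property recalled earlier in the paper, namely that goodness is preserved under passing to a finite-index overgroup. Applied to $W_0 \leq W$ this immediately yields goodness of $W$, completing the argument.

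There is essentially no obstacle: the corollary is a formal consequence of two ingredients already in place, namely virtual specialness of Coxeter groups and goodness of hyperbolic virtually special groups. The only verification one needs to keep in mind is the standard fact that hyperbolicity descends to finite-index subgroups, and that the closure of goodness under commensurability was noted alongside Theorem \ref{goodsplitting}.
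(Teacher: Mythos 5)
Your argument is correct and is essentially the paper's (one-line) deduction: hyperbolic Coxeter groups are virtually special by Haglund--Wise, hence good by the preceding theorem. Your detour through a finite-index special subgroup $W_0$ and back up via closure of goodness under finite-index overgroups is harmless but unnecessary, since $W$ itself is already a hyperbolic virtually special group to which the theorem applies directly.
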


For right-angled Artin groups, Theorem \ref{goodRAAGs} guaranteed that in fact the mod-$p$ cohomology is determined by the pro-$p$ completion. This property, which is sometimes called {\em $p$-goodness}, is rather rarer than straightforward goodness; in particular proofs will often require strong separability constraints in which only $p$-group quotients are available. These constraints are difficult to obtain in general.

We move now to a result of a rather different flavour. Often, properties of the underlying graph of a right-angled Artin or Coxeter group are expressible as group theoretic properties. As an example of such a property carrying over to the profinite world, we prove the following Theorem.
\begin{theorem}\label{freeprod}
Let $\Gamma$ be a graph. Then $\widehat{A(\Gamma)}$, respectively $\widehat{C(\Gamma)}$, splits as a non-trivial profinite free product $H_1\amalg H_2$ if and only if $\Gamma$ is disconnected. 
\end{theorem}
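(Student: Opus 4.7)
The plan is to prove each direction separately.

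For the easy direction, if $\Gamma=\Gamma_1\sqcup\Gamma_2$ then as abstract groups $A(\Gamma)=A(\Gamma_1)\ast A(\Gamma_2)$, and likewise $C(\Gamma)=C(\Gamma_1)\ast C(\Gamma_2)$. Each factor is a retract of the whole group, so the splitting is efficient (in the sense recalled earlier in the paper) and profinite completion sends it to the corresponding profinite free product $\widehat{A(\Gamma_1)}\amalg\widehat{A(\Gamma_2)}$, and similarly for $C$.

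For the converse I would suppose $\widehat{A(\Gamma)}=H_1\amalg H_2$ non-trivially and assume for contradiction that $\Gamma$ is connected. If $|V(\Gamma)|=1$ then $\widehat{A(\Gamma)}=\hat{\Z}$ is procyclic, whereas any non-trivial profinite free product is not, so we are done. Otherwise connectivity produces an edge $v\sim w$ and $\langle v,w\rangle\iso\Z^2\leq A(\Gamma)$. The natural map $\hat{\Z}^2\to\widehat{A(\Gamma)}$ is injective: for each $m,n\geq 1$, the quotient $A(\Gamma)\twoheadrightarrow\Z/m\times\Z/n$ killing every generator other than $v,w$ realises the finite-index subgroup $m\Z\oplus n\Z$ as an intersection $\Z^2\cap K$ with $K$ of finite index in $A(\Gamma)$. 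Hence $\hat{\Z}^2$ sits inside $\widehat{A(\Gamma)}$ as a closed subgroup.

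I would then invoke the profinite Bass--Serre theory of Ribes--Zalesskii: $\widehat{A(\Gamma)}$ acts on a profinite tree $T$ whose vertex stabilisers are the conjugates of $H_1$ and $H_2$ and whose edge stabilisers are trivial. A closed abelian subgroup which fails to be elliptic must embed topologically into $\hat{\Z}$---the pointwise stabiliser of the axis of any hyperbolic element is an intersection of adjacent vertex stabilisers and is therefore trivial, and the induced action is by translations---so the non-procyclic subgroup $\hat{\Z}^2$ must be elliptic, lying inside some conjugate $V=gH_ig^{-1}$. In particular $v\in V$, and since $v$ has infinite order this $V$ is unique: fixing two adjacent vertices of $T$ would place $v$ in a trivial edge stabiliser. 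Running the same argument at each neighbour of $v$ and then iterating along paths in $\Gamma$, connectedness forces every vertex of $\Gamma$ to lie in $V$. Topological density of the generating set gives $V=\widehat{A(\Gamma)}$, but a vertex stabiliser is a proper subgroup unless $H_{3-i}$ is trivial, which is the required contradiction.

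The Coxeter case is parallel with $\hat{\Z}^2$ replaced by the finite subgroup $\Z/2\times\Z/2$ generated by two adjacent vertices; finite subgroups of profinite free products are automatically elliptic, and an order-$2$ element still cannot fix two adjacent vertices of $T$ without falling into the trivial edge stabiliser, so the uniqueness step survives unchanged. I expect the main obstacle to be the appeal to profinite Bass--Serre theory: both setting up the profinite tree for a non-trivial profinite free product, and extracting the elliptic--hyperbolic dichotomy for closed abelian subgroups that forces $\hat{\Z}^2$ (or $\Z/2\times\Z/2$) into a conjugate of a factor. Everything else in the argument is essentially combinatorial.
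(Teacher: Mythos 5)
Your proposal is correct and follows essentially the same route as the paper: both arguments act on the profinite tree associated to the free product, use the Ribes--Zalesskii dichotomy for abelian subgroups to force the $\hat{\Z}^2$ (resp.\ $\Z/2\times\Z/2$) coming from an edge of $\Gamma$ into a unique vertex stabiliser, and propagate along $\Gamma$ by connectedness. The only cosmetic difference is that you verify the embedding $\hat{\Z}^2\hookrightarrow\widehat{A(\Gamma)}$ by exhibiting explicit congruence quotients, where the paper uses the fact that the edge subgroup $\Z^2$ is a retract of $A(\Gamma)$.
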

The proof will call upon the theory of actions on profinite trees developed by, among others, Ribes and Zalesskii. The theory is contained in the unpublished book \cite{RZup}; the closely related pro-$p$ version may be found in published form in \cite{RZ00}. 
\begin{proof}[Proof of Theorem \ref{freeprod}]
If $\Gamma$ is disconnected the result follows directly from the abstract case. So suppose that $\Gamma$ is connected and that $G=\widehat{A(\Gamma)}$ splits as a profinite free product $H_1\amalg H_2$. The case when $\Gamma$ is a point is easy, so suppose that $\Gamma$ is not a point. 

The splitting of $G$ as a free profinite product induces an action of $G$ on a profinite tree $T$, where vertex stabilisers are precisely the conjugates of the $H_i$ (see Lemma 5.3.1 of \cite{RZup}). All edge stabilisers are trivial, so that no element of $G$ can fix more than one point of $T$. By Proposition 3.2.3 of \cite{RZup}, any abelian group acting on $T$ either fixes a point or is a subgroup of $\hat \Z$. Each of the standard generators of $A(\Gamma)$ is contained in a copy of $\Z{}^2$ as there is some edge adjacent to the corresponding vertex; and these copies of $\Z^2$ are retracts of the whole RAAG, hence give an inclusion of $\hat\Z{}^2$ in the profinite completion. Hence every generator of $G$ fixes some (unique) vertex of $T$, and so is contained in a (unique) conjugate of $H_1$ or $H_2$. 

Note that for every edge $e=[v,w]$ of $\Gamma$, the subgroup $\overline{<\!v,w\!>}$ is a rank 2 free abelian group so that $v,w$ fix the same vertex of $T$; by connectedness of $\Gamma$, it follows that all of $\widehat{A(\Gamma)}$ fixes a vertex of $T$ and so $H_1 = 1$ or $H_2=1$.

The case of a right-angled Coxeter group is similar; indeed it is easier, as all the generators have finite order, and therefore fix a vertex of $T$ by Theorem 3.1.7 of \cite{RZup}.   
\end{proof}

\noindent{\bf Acknowledgements} The first author was supported by the EPSRC. The second author was supported by the EPSRC and a Lamb and Flag Scholarship from St John's College Oxford. The authors would like to thank Marc Lackenby for helpful discussions regarding the paper.

\bibliographystyle{alpha}
\bibliography{RAAGs}
\end{document}